\newcolumntype{Y}{>{\centering\arraybackslash}X}
\DeclarePairedDelimiter\floor{\lfloor}{\rfloor}
\newtheorem{theorem}{Theorem}[section]
\newtheorem{lemma}[theorem]{Lemma}
\newtheorem{corollary}[theorem]{Corollary}
\long\def\delete#1{}
\definecolor{VeryLightBlue}{rgb}{0.9,0.9,1}
\definecolor{LightBlue}{rgb}{0.8,0.8,1}
\definecolor{MidBlue}{rgb}{0.5,0.5,1}
\definecolor{DarkBlue}{rgb}{0,0,0.6}
\definecolor{Blue}{rgb}{0,0,1}
\definecolor{Gold}{rgb}{1,0.843,0}
\definecolor{LightGreen}{rgb}{0.88,1,0.88}
\definecolor{MidGreen}{rgb}{0.6,1,0.6}
\definecolor{DarkGreen}{rgb}{0,0.6,0}
\definecolor{VeryLightYellow}{rgb}{1,1,0.9}
\definecolor{LightYellow}{rgb}{1,1,0.6}
\definecolor{MidYellow}{rgb}{1,1,0.5}
\definecolor{DarkYellow}{rgb}{1,1,0.2}
\definecolor{DarkPurple}{rgb}{.6,0,1}
\definecolor{Red}{rgb}{1,0,0}
\definecolor{VeryLightRed}{rgb}{1,0.9,0.9}
\definecolor{LightRed}{rgb}{1,0.8,0.8}
\definecolor{MidRed}{rgb}{1,0.55,0.55}
\newcommand{\be}{\begin{equation}}
\newcommand{\ee}{\end{equation}}
\newcommand{\bea}{\begin{eqnarray}}
\newcommand{\eea}{\end{eqnarray}}
\newcommand{\bean}{\begin{eqnarray*}}
\newcommand{\eean}{\end{eqnarray*}}
\def\PG{\mathrm{PG}}
\def\Aut{{\rm Aut}}
\def\PSU{{\rm PSU}}
\def\PGammaU{{\rm P\Gamma U}}
\def\qed{\hfill$\Box$\vspace{12pt}}
\title{The vertex-isoperimetric number of the incidence and non-incidence graphs of unitals}
\author[1]{Alice M. W. Hui\thanks{E-mail: \texttt{alicemwhui@uic.edu.hk}, \texttt{huimanwa@gmail.com}}}
\author[2]{Muhammad Adib Surani\thanks{E-mail: \texttt{m.surani@student.unimelb.edu.au}}}
\author[2]{Sanming Zhou\thanks{E-mail: \texttt{sanming@unimelb.edu.au}}}
\affil[1]{\small Statistics Program, Beijing Normal University-Hong Kong Baptist University United International College, China}
\affil[2]{\small School of Mathematics and Statistics, The University of Melbourne, Parkville, VIC 3010, Australia}
\begin{document}
\openup 0.8\jot 
\maketitle

\abstract{We derive upper and lower bounds for the vertex-isoperimetric number of the incidence graphs of unitals and determine its order of magnitude. In the case when a unital contains sufficiently large arcs, these bounds agree and give rise to the precise value of this parameter. In particular, we obtain the exact value of the vertex-isoperimetric number of the incidence graphs of classical unitals and a certain subfamily of BM-unitals. In the case when the maximum size of arcs in the unital is relatively small, we obtain an upper bound for this parameter in terms of the vertex-isoperimetric number of the incidence graph. We also determine the exact value of the vertex-isoperimetric number of the non-incidence graph of any unital.

\medskip

{\it Keywords:}~ vertex-isoperimetric number; unital; incidence graph   

{\it AMS subject classification (2010):}~ 05C40, 05B25
}

\delete
{
\abstract{Let $U$ be a unital and $G_U$ its incidence graph. Let $i(G_U)$ be the vertex-isoperimetric number of $G_U$. We obtain upper and lower bounds for $i(G_U)$ and determine its order of magnitude. In the case when $U$ contains sufficiently large arcs, these bounds agree and give rise to the precise value of $i(G_U)$. In particular, we obtain the exact value of $i(G_U)$ when $U$ is any classical unital or in a certain subfamily of BM-unitals. In the case when the maximum size of arcs in $U$ is relatively small, we obtain an upper bound for this parameter in terms of $i(G_U)$. We also determine the exact value of the vertex-isoperimetric number of the non-incidence graph of any unital.}
}

\section{Introduction}
\label{sec:int}

A \emph{unital} is a $2$-$(n^3+1, n+1, 1)$ design for some integer $n \ge 2$. In this paper we derive upper and lower bounds for the vertex-isoperimetric number of the incidence graphs of unitals and determine its order of magnitude. In the case when a unital contains sufficiently large arcs, we obtain the precise value of the vertex-isoperimetric number of its incidence graph. In particular, this happens for all classical unitals and a certain subfamily of BM-unitals, and from the former we obtain a new infinite family of edge-transitive graphs whose vertex-isoperimetric number can be computed exactly.
In the case when the maximum size of arcs in the unital is relatively small, our upper bound can be interpreted as an upper bound for this parameter in terms of the vertex-isoperimetric number of the incidence graph. We also determine the precise value of the vertex-isoperimetric number of the non-incidence graph of any unital. 

Let $G$ be a graph with vertex set $V(G)$ and $S$ a subset of $V(G)$. The \emph{neighbourhood} of $S$ in $G$, denoted by $N_G(S)$ or simply $N(S)$, is the set of vertices in $V(G) \setminus S$ that are adjacent to at least one vertex in $S$. The \emph{vertex-isoperimetric number} of $G$ is defined as
\[
i(G) = \min\left\{\frac{|N(S)|}{|S|} : \emptyset \neq S \subseteq V(G), |S| \le \frac{|V(G)|}{2}\right\}.
\]
The \emph{edge-isoperimetric number}  (also known as the Cheeger constant) of $G$ is defined in the same way but with $N(S)$ replaced by the set of edges between $S$ and $V(G) \setminus S$. 
These two parameters are of fundamental importance for measuring connectivity and expansion of graphs. For example, they are used to define expander graphs, which play a prominent role in theoretical computer science, information theory, cryptography, combinatorics, pure mathematics, etc. The isoperimetric problems for graphs have been studied extensively. See \cite{HLW} for background information and a large number of results (often involving eigenvalues of graphs) in the context of expander graphs. See also \cite{harper2004global} for the history of the vertex-isoperimetric problem and related results. It is NP-hard to determine the vertex-isoperimetric number of a general graph, and there are few known families of graphs whose vertex-isoperimetric numbers have been computed explicitly. 

Given positive integers $v > k > \lambda$, a \emph{$2$-$(v,k,\lambda)$ design} \cite{colbourn2010crc} is a pair $D = (P, \mathcal{B})$, where $P$ is a set of $v$ elements each called a \emph{point}, and $\mathcal{B}$ is a set of $k$-element subsets of $P$ each called a \emph{block}, such that any 2-element subset of $P$ is contained in exactly $\lambda$ blocks. The number of blocks containing a given point is independent of the choice of the point. This number is denoted by $r$ and the number of blocks is denoted by $b$. We call $(v, b, r, k, \lambda)$ the \emph{parameters} of $D$. These parameters always satisfy $vr=bk$ and $r(k-1) = \lambda(v-1)$ \cite{colbourn2010crc}. The \emph{complement} of $D$ is defined \cite{colbourn2010crc} as $\overline{D} = \left(P,\overline{\mathcal{B}}\right)$, where $\overline{\mathcal{B}} = \{P\setminus B : B \in \mathcal{B}\}$; $\overline{D}$ is a $2$-design with parameters $(v,b,b-r,v-k,b-2r+\lambda)$.

The \emph{incidence graph} of a $2$-design $D = (P, \mathcal{B})$, denoted by $G_{D}$, is the bipartite graph with vertex set $P \cup \mathcal{B}$ and bipartition $\{P, \mathcal{B}\}$, such that a point vertex $p \in P$ is adjacent to a block vertex $B \in \mathcal{B}$ if and only if $p$ lies in $B$. Similarly, the \emph{non-incidence graph} of $D$ is the bipartite graph with the same vertex set and bipartition such that $p \in P$ is adjacent to $B \in \mathcal{B}$ if and only if $p$ does not lie in $B$. The non-incidence graph of $D$ is the same as the incidence graph of $\overline{D}$ and hence is denoted by $G_{\overline{D}}$.

A unital of \emph{order} $n \ge 2$ is a 2-$(n^3+1, n+1, 1)$ design \cite{colbourn2010crc}; its parameters are 
\be
\label{eq:uni}
(v,b,r,k,\lambda) = (n^3+1, n^4-n^3+n^2, n^2, n+1, 1).
\ee
The blocks of a unital are often called \emph{lines}, and we use these two terms interchangeably.
Unitals are interesting because some of them are embeddable in a projective plane as subdesigns \cite{BE3}. There are two well known families of unitals embedded in the Desarguesian projective plane $\PG(2,q^2)$ over the finite field $\mathbb F_{q^2}$ of order $q^2$, where $q > 2$ is a prime power. One is the family of \emph{classical unitals} $H(q)$, formed by the absolute points and non-absolute lines of a unitary polarity of $\PG(2,q^2)$. The points of $H(q)$ are the points of a Hermitian curve, which is projectively equivalent to the curve
$$
\left\{(x_0, x_1,x_2) : x_0^{q+1}+ x_1^{q+1} + x_2^{q+1} = 0, x_0, x_1,x_2\in \mathbb F_{q^2}\right\}.
$$
It is well known \cite{O'Nan, Taylor} that $\Aut(H(q)) = \PGammaU(3, q)$. Thus, for every $G$ with $\PSU(3,q) \le G \le \PGammaU(3,q)$, $H(q)$ is $(G, 2)$-point-transitive and hence $G$-flag-transitive, where a {\em flag} is an incident point-line pair. 

Another family of unitals embeddable in $\PG(2,q^2)$ consists of \emph{Buekenhout-Metz unitals} (thereafter BM-unitals) arising from two constructions \cite{Bkt,Me} of Buekenhout. These unitals are called \emph{parabolic} or \emph{hyperbolic}, depending on the number of unital points on the line at infinity of $\PG(2,q^2)$.
When $q$ is odd, the point set of a parabolic BM-unital is of the form
\begin{equation}\label{BMunital}
	\left\{(x, \alpha x^2+ \beta x^{q+1}+r,1) : x\in \mathbb F_q^2,r \in \mathbb F_q\right\}\cup \{(0,1,0)\},
\end{equation}
where $\alpha, \beta \in \mathbb F_{q^2}$, and $(\beta^q-\beta )^2 +4\alpha^{q+1}$ is a non-square in $\mathbb F_q$ \cite{BE2}. When $q$ is even, the point set of a parabolic BM-unital is of the form \eqref{BMunital}, where $\alpha, \beta \in \mathbb F_{q^2}, \beta \notin \mathbb F_q$, and $\alpha^{q+1}/(\beta^q+\beta)^2$ has trace 0 over $\mathbb F_2$ \cite{E}. In both cases we denote the unital defined in \eqref{BMunital} by $U(\alpha, \beta,q)$.  


Given an integer $k \ge 3$, a \emph{$k$-arc}, or simply an \emph{arc} if its size $k$ is not a concern, of a unital $U$ is a set of $k$ points of $U$ of which no three are collinear. A $k$-arc is \emph{complete} if it is not contained in any $(k+1)$-arc. Complete $k$-arcs have been studied for several classes of unitals \cite{AK,BE,fisher1986complete,HS}. Their sizes range from $n-1$ to $n^2+1$, where $n$ is the order of the unital. Define
$$
m(U) = \max\{k: k \ge 3, \text{ $U$ contains a $k$-arc}\}
$$ 
to be the maximum size of an arc in $U$. 

The purpose of this paper is to study the vertex-isoperimetric number of the incidence and non-incidence graphs of unitals. 
Our first main result, stated below, asserts that the vertex-isoperimetric number of the incidence graph of any unital of order $n$ is $\frac{2n}{n^2 + 1}$ approximately. Set
\be
\label{eq:x*n}
c(n) = n^2 - \frac{\sqrt{8n^2+9}-3}{2}.
\ee

\begin{theorem}
\label{thm:inc}
Let $U$ be a unital of order $n \ge 2$. Then
\be
\label{eq:lub}
\frac{2 \left(n^3+1-\floor*{c(n)}\right)}{n^2(n^2+1)} \le i(G_U) \le \frac{2 \left(n^3+1-\min\{m(U), \floor*{c(n)}\}\right)}{n^2(n^2+1)}.
\ee
\end{theorem}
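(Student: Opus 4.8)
The plan is to prove the right-hand inequality by an explicit construction and the left-hand one by a counting argument; the construction side is routine while the counting side is the heart of the matter. Write $P$ for the $n^3+1$ points and $\mathcal B$ for the $n^4-n^3+n^2$ lines of $U$, so that $G_U$ is bipartite with parts $P,\mathcal B$, each point lying on $n^2$ lines and each line carrying $n+1$ points. Since $|V(G_U)|=n^4+n^2+1$ is odd, every admissible $S$ satisfies $|S|\le \floor*{|V(G_U)|/2}=n^2(n^2+1)/2=:C$. I will use throughout the \emph{packing bound}: because two points determine a unique line, a point set $Y$ contains at most $\binom{|Y|}2/\binom{n+1}2=|Y|(|Y|-1)/(n(n+1))$ lines.

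\emph{Upper bound.} Fix an arc $X$ with $|X|=x:=\min\{m(U),\floor*{c(n)}\}$, which exists because $x\le m(U)$. As no line meets an arc in more than two points, counting incidences shows that exactly $\binom x2$ lines are secant to $X$ and $x(n^2-x+1)$ are tangent, so $\tfrac12 x(2n^2+1-x)$ lines meet $X$. Let $S=X\cup\{\text{lines meeting }X\}$ and then adjoin lines disjoint from $X$ until $|S|=C$. Every line through a point of $X$ lies in $S$ (so contributes nothing to $N(S)$), and the lines of $S$ cover every point off $X$ but no point of $X$; hence $N(S)=P\setminus X$, $|N(S)|=n^3+1-x$, and the ratio equals $\frac{n^3+1-x}{C}=\frac{2(n^3+1-x)}{n^2(n^2+1)}$, as required. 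That the unpadded set already satisfies $|X\cup\{\text{lines meeting }X\}|=\tfrac12 x(2n^2+3-x)\le C$, and that enough disjoint lines remain for the padding, both reduce to $x(2n^2+3-x)\le n^2(n^2+1)$; this holds because $x\le\floor*{c(n)}$ and the relevant root of $x^2-(2n^2+3)x+(n^4+n^2)=0$ --- the quadratic with discriminant $8n^2+9$ --- is exactly $c(n)$.

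\emph{Lower bound, core.} Write $S=S_P\cup S_B$ with $a=|S_P|,\ s=|S_B|,\ \sigma=a+s\le C$, and split $N(S)$ into its point part $N_P=N(S_B)\setminus S_P$ and line part $N_B=N(S_P)\setminus S_B$, so $|N(S)|=|N_P|+|N_B|$. Letting $m_j$ be the number of lines meeting $S_P$ in exactly $j$ points, the incidence identities $\sum_j jm_j=an^2$ and $\sum_j\binom j2 m_j=\binom a2$, together with $\binom j2\ge j-1$, give
\[
\mu:=|N(S_P)|=\sum_j m_j=an^2-\sum_j(j-1)m_j\ \ge\ an^2-\binom a2=\tfrac12 a(2n^2+1-a),
\]
with equality exactly for arcs. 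Consider first the \emph{shielded} case $N_B=\varnothing$, i.e.\ every line meeting $S_P$ lies in $S_B$; then $s\ge\mu$, the pencil through any point of $S_P$ covers $P$, so $N(S)=P\setminus S_P$, $|N(S)|=n^3+1-a$, while $\sigma\ge a+\mu\ge\tfrac12 a(2n^2+3-a)$. If $a>\floor*{c(n)}$ this is impossible: for $\floor*{c(n)}<a\le n^2+1$ one has $\tfrac12 a(2n^2+3-a)>C$ (we lie strictly between the two roots $c(n)$ and $\tfrac12((2n^2+3)+\sqrt{8n^2+9})>n^2+1$), while for $a>n^2+1$ the Cauchy--Schwarz bound $\mu\ge an^4/(n^2+a-1)$ and the identity $(n^2+1)n^4/(n^2+(n^2+1)-1)=C$ force $\mu>C$; in either case $\sigma>C$. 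Hence $a\le\floor*{c(n)}$ and $\frac{|N(S)|}\sigma\ge\frac{n^3+1-a}C\ge\frac{n^3+1-\floor*{c(n)}}C$, the claimed bound.

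\emph{Main obstacle.} The real difficulty is the general case $N_B\ne\varnothing$, where $S_B$ need not cover $P$. Here both boundary pieces must be controlled simultaneously: the line part via $|N_B|=\mu-|N(S_P)\cap S_B|\ge \tfrac12 a(2n^2+1-a)-s$, and the point part via the packing bound, observing that the lines of $S_B$ disjoint from $S_P$ lie inside the $|N_P|$ boundary points, so $s-|N(S_P)\cap S_B|\le |N_P|(|N_P|-1)/(n(n+1))$. I expect the cleanest completion is an exchange argument: while some line meets $S_P$ but is missing from $S_B$, adjoining it to $S_B$ never increases the ratio (it deletes one boundary line and adds boundary points only when $P$ is not yet covered), which drives an extremal $S$ toward the shielded case analysed above. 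The delicate points are to run this reduction within the hard budget $\sigma\le C$, and to confirm that the optimum is the shielded configuration with $a=\floor*{c(n)}$ and $\sigma=C$, so that the same quadratic threshold $c(n)$ governs both the lower and the upper bound.
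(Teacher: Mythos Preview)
Your upper-bound construction is correct and is essentially the paper's own argument.

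For the lower bound, your handling of the shielded case $N_B=\varnothing$ is correct and clean. The gap is exactly where you say it is: the general case is not proved, and the exchange argument you sketch does not work as stated. Adjoining a line $\ell\in N(S_P)\setminus S_B$ to $S_B$ removes one line from $N_B$ but can add up to $n$ new points to $N_P$ (the points of $\ell$ outside $S_P\cup N(S_B)$). The ratio is non-increasing only when the number $\delta$ of such new points satisfies $\delta\le |N(S)|/\sigma+1$; since the target ratio is about $2/n$, this fails as soon as $\delta\ge 2$, which is generic early in the process. So the exchange can strictly raise the ratio, and there is no reason an extremal $S$ is reducible to shielded form. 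The budget obstruction $\sigma\le C$, which you also flag, is a second independent problem: one may run out of room before all of $N(S_P)$ is absorbed.

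The paper bypasses any structural reduction and argues analytically. It first establishes, for an arbitrary $2$-design, four simultaneous inequalities on $(x,y,x',y')$ (your $a,s,|N_P|,|N_B|$): two lower bounds on $y+y'$ coming from first- and second-moment counts of $|B\cap X|$, a lower bound on $x+x'$ obtained by applying the previous bound to $P\setminus N(Y)$, and---crucially---a lower bound $y'\ge \tfrac{4\lambda}{k^2}\,x\,(v-x-x')$ coming from counting length-two paths with one end in $X$ and the other outside $X\cup N(Y)$. This last bound ties $y'$ directly to the deficit $v-x-x'$ and is what replaces your exchange idea. A short technical lemma then shows that the desired inequality $(x'+y')/(x+y)\ge 2(n^3+1-\lfloor c(n)\rfloor)/(n^2(n^2+1))$ follows whenever one of two explicit conditions holds (one for $x\le (n+1)^2/4$, phrased via an auxiliary function $f(x,y)$; one for $x>(n+1)^2/4$, phrased via $y'$). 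The proof finishes with a five-case split on the size of $x$ relative to $(n+1)^2/4$, $c(n)$, and $n^2$ (and, for small $x$, on the size of $y$), verifying in each case that one of the two conditions is met; $n=2$ is done separately by hand. No reduction to a canonical $S$ is attempted.
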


The upper bound here says that $i(G_U)$ is bounded from above by $m(U)$. In particular, if a unital $U$ admits sufficiently large arcs, namely $m(U) \ge \floor*{c(n)}$, then $i(G_U)$ is given by the following formula.  

\begin{corollary}
\label{cor:sharp}
Let $U$ be a unital of order $n \ge 2$. If $m(U) \ge \floor*{c(n)}$, then
	\[
	i(G_U) = \frac{2(n^3+1-\floor*{c(n)})}{n^2(n^2+1)}.
	\]
\end{corollary}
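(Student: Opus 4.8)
The plan is to obtain Corollary~\ref{cor:sharp} as an immediate squeeze between the two bounds of Theorem~\ref{thm:inc}, which I am free to assume. The only observation needed is that the hypothesis $m(U) \ge \floor*{c(n)}$ collapses the minimum appearing in the upper bound: since $\floor*{c(n)}$ is then no larger than $m(U)$, we have $\min\{m(U), \floor*{c(n)}\} = \floor*{c(n)}$. Substituting this into the right-hand inequality of \eqref{eq:lub} gives
\[
i(G_U) \le \frac{2\left(n^3+1-\floor*{c(n)}\right)}{n^2(n^2+1)},
\]
which is exactly the left-hand (lower) bound of \eqref{eq:lub}. The two inequalities therefore pinch $i(G_U)$ to their common value, yielding the claimed identity.

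Conceptually, all the content is carried by Theorem~\ref{thm:inc} rather than by the corollary. The upper bound there should be realized by an explicit test set $S$ of the maximal admissible size $\floor*{|V(G_U)|/2} = n^2(n^2+1)/2$ whose neighbourhood omits an arc $Q$: because $\lambda=1$, every point off $Q$ is collinear with a point of $Q$, so a set whose complement consists of $Q$ together with the lines meeting $Q$ has neighbourhood equal to the $n^3+1-|Q|$ points outside $Q$, with the remaining slack filled by lines disjoint from $Q$ (which add no new neighbours). This is why the largest arc size $m(U)$ enters the bound, and the threshold $\floor*{c(n)}$ is precisely the value of $|Q|$ at which such a set reaches half of $V(G_U)$; the lower bound correspondingly shows that enlarging the arc beyond $\floor*{c(n)}$ cannot help. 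The hypothesis of the corollary just guarantees that $U$ actually contains an arc large enough to saturate this threshold, so the extremal configuration is available.

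There is no genuine obstacle in the corollary beyond the bookkeeping of the minimum; the single point to verify is that $m(U) \ge \floor*{c(n)}$ forces the minimum to equal $\floor*{c(n)}$ rather than $m(U)$, which is immediate. Any real difficulty lives entirely in Theorem~\ref{thm:inc}, specifically in establishing the matching lower bound, i.e.\ in proving that no configuration beats the arc-based construction.
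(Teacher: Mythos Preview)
Your argument is correct and matches the paper's: the corollary is stated there as an immediate consequence of Theorem~\ref{thm:inc}, with no separate proof, precisely because $m(U) \ge \floor*{c(n)}$ forces $\min\{m(U), \floor*{c(n)}\} = \floor*{c(n)}$ and collapses the two bounds in \eqref{eq:lub}. Your additional commentary on the extremal construction is accurate in spirit, though slightly off in detail (the test set in the paper is the arc together with a superset of its neighbouring lines, not the complement you describe), but this does not affect the corollary itself.
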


The case $m(U) \ge \floor*{c(n)}$ occurs for all classical unitals and a subfamily of BM-unitals. In fact, it was proved in \cite{fisher1986complete} that for any prime power $q$ the classical unital $H(q)$ admits a complete $(q^2-q+1)$-arc. (In fact, $H(q)$ is the disjoint union of $q+1$ complete arcs of size $q^2-q+1$.) Hence $m(H(q)) \ge q^2-q+1 \ge \floor*{c(q)}$. In \cite{BE,HS} it was proved that for an odd prime power $q$, if $\alpha\in \mathbb F_{q^2}$ and $\beta \in \mathbb F_q$ are such that $(\beta^q-\beta )^2 +4\alpha^{q+1}$ is a non-square in $\mathbb F_q$, then the BM-unital $U(\alpha, \beta,q)$ admits a complete $(q^2+1)$-arc and so $m(U(\alpha, \beta,q)) \ge \floor*{c(q)}$. Combining these known results and Corollary \ref{cor:sharp}, we obtain the following corollaries. 

\begin{corollary}
\label{cor:classical}
	Let $H(q)$ be the classical unital of order $q > 2$, where $q$ is a prime power. Then
	\[
	i(G_{H(q)}) = \frac{2(q^3+1-\floor*{c(q)})}{q^2(q^2+1)}.
	\]
\end{corollary}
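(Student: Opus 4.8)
The plan is to obtain this as an immediate application of Corollary \ref{cor:sharp} to the unital $U = H(q)$, which has order $n = q$. That corollary delivers precisely the claimed formula once its single hypothesis, the arc-size bound $m(H(q)) \ge \floor*{c(q)}$, has been checked. So the whole task reduces to verifying this one inequality and then quoting Corollary \ref{cor:sharp} verbatim; all the analytic substance (the coincidence of the upper and lower bounds of Theorem \ref{thm:inc} in this regime) is already packaged inside that corollary, which I am entitled to assume.

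First I would invoke the result cited in the text from \cite{fisher1986complete}: for every prime power $q$ the classical unital $H(q)$ contains a complete $(q^2-q+1)$-arc. By the definition of $m(\cdot)$ this gives $m(H(q)) \ge q^2 - q + 1$, so it suffices to establish the purely arithmetic inequality $q^2 - q + 1 \ge \floor*{c(q)}$, after which $m(H(q)) \ge \floor*{c(q)}$ follows by transitivity.

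Next I would reduce this to a polynomial inequality. Since $q^2 - q + 1$ is an integer, the statement $\floor*{c(q)} \le q^2 - q + 1$ is equivalent to $c(q) < q^2 - q + 2$. Substituting $c(q) = q^2 - \tfrac{1}{2}\!\left(\sqrt{8q^2+9}-3\right)$ and simplifying, this becomes $\sqrt{8q^2+9} > 2q - 1$; for $q \ge 1$ both sides are positive, so squaring yields the equivalent $8q^2 + 9 > 4q^2 - 4q + 1$, i.e. $4q^2 + 4q + 8 > 0$, which holds for every real $q$. Hence $\floor*{c(q)} \le q^2 - q + 1 \le m(H(q))$, as needed.

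Finally, with the hypothesis of Corollary \ref{cor:sharp} verified, applying that corollary with $n = q$ gives exactly $i(G_{H(q)}) = \frac{2(q^3+1-\floor*{c(q)})}{q^2(q^2+1)}$. I do not anticipate a genuine obstacle here: the only step requiring any work is the elementary inequality above, and even that collapses to a trivially positive quadratic. The real content of the result lives entirely in Corollary \ref{cor:sharp} and in the known existence of a sufficiently large complete arc, both of which are available to be cited directly.
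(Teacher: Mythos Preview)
Your proposal is correct and follows essentially the same route as the paper: cite the $(q^2-q+1)$-arc of \cite{fisher1986complete} to get $m(H(q)) \ge q^2-q+1$, check the elementary inequality $q^2-q+1 \ge \floor*{c(q)}$, and then invoke Corollary~\ref{cor:sharp}. The paper simply asserts the arithmetic inequality without justification, whereas you have spelled out its verification; otherwise the arguments are identical.
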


\begin{corollary}
\label{cor:BM}
Let $q$ be an odd prime power, and let $\alpha\in \mathbb F_{q^2}$ and $\beta \in \mathbb F_q$ be such that $(\beta^q-\beta )^2 +4\alpha^{q+1}$ is a non-square in $\mathbb F_q$. Then
	\[
	i(G_{U(\alpha, \beta,q)}) = \frac{2(q^3+1-\floor*{c(q)})}{q^2(q^2+1)}.
	\]
\end{corollary}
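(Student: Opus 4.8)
The plan is to deduce this directly from Corollary~\ref{cor:sharp} applied to the BM-unital $U(\alpha,\beta,q)$, which has order $n=q$. The only thing that really needs checking is that the arc hypothesis $m(U(\alpha,\beta,q)) \ge \floor*{c(q)}$ of that corollary is satisfied; everything else is then automatic once we substitute $n=q$ into the formula of Corollary~\ref{cor:sharp}.

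To verify the hypothesis, I would invoke the arc-existence result of \cite{BE,HS}: under the stated conditions on $\alpha$ and $\beta$ (namely $\alpha \in \mathbb F_{q^2}$, $\beta \in \mathbb F_q$ with $(\beta^q-\beta)^2 + 4\alpha^{q+1}$ a non-square in $\mathbb F_q$), the unital $U(\alpha,\beta,q)$ contains a complete $(q^2+1)$-arc, whence $m(U(\alpha,\beta,q)) \ge q^2+1$. It then remains to compare $q^2+1$ with $\floor*{c(q)}$. Since $\sqrt{8q^2+9} > 3$ for every $q \ge 2$, the definition \eqref{eq:x*n} gives $c(q) = q^2 - \tfrac{1}{2}\bigl(\sqrt{8q^2+9}-3\bigr) < q^2$, so that $\floor*{c(q)} \le c(q) < q^2 < q^2+1 \le m(U(\alpha,\beta,q))$. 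Hence $m(U(\alpha,\beta,q)) \ge \floor*{c(q)}$, as required.

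With the hypothesis confirmed, Corollary~\ref{cor:sharp} with $n=q$ yields
\[
i(G_{U(\alpha,\beta,q)}) = \frac{2\left(q^3+1-\floor*{c(q)}\right)}{q^2(q^2+1)},
\]
which is exactly the claimed value.

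There is essentially no obstacle internal to this corollary: all the genuine difficulty lies elsewhere, namely in establishing Corollary~\ref{cor:sharp} (and behind it Theorem~\ref{thm:inc}) and in the arc-existence theorem of \cite{BE,HS}. The one point requiring care is the bookkeeping that $U(\alpha,\beta,q)$ is a unital of order $n=q$, so that the generic formula of Corollary~\ref{cor:sharp} specialises correctly; the numerical comparison $q^2+1 > c(q)$ is immediate.
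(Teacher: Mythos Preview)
Your proposal is correct and follows exactly the same route as the paper: cite \cite{BE,HS} for the existence of a complete $(q^2+1)$-arc in $U(\alpha,\beta,q)$, observe that $q^2+1 > c(q)$ so that $m(U(\alpha,\beta,q)) \ge \floor*{c(q)}$, and then apply Corollary~\ref{cor:sharp} with $n=q$. There is nothing to add.
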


Since $H(q)$ is flag-transitive, its incidence graph $G_{H(q)}$ is edge-transitive. So Corollary \ref{cor:classical} provides an infinite family of edge-transitive graphs whose vertex-isoperimetric numbers are known explicitly. 
 
Another way to interpret the upper bound in Theorem \ref{thm:inc} is as follows.  

\begin{corollary}
\label{cor:inc}
Let $U$ be a unital of order $n \ge 2$. Then either 
$$
m(U) > \floor*{c(n)}
$$ 
or 
$$
m(U) \le n^3+1 - \frac{n^2(n^2+1)}{2} i(G_U). 
$$
\end{corollary}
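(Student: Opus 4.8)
The plan is to derive this corollary directly from the upper bound in Theorem \ref{thm:inc}, by a case analysis on which of $m(U)$ and $\floor*{c(n)}$ attains the minimum in the expression $\min\{m(U), \floor*{c(n)}\}$ appearing there. The two alternatives in the statement correspond exactly to these two cases, so the argument is essentially a rewriting of the upper bound and requires no new ideas.

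First I would dispose of the case $m(U) > \floor*{c(n)}$. Here the first alternative in the disjunction holds by hypothesis, and there is nothing further to prove. I would then treat the complementary case $m(U) \le \floor*{c(n)}$. In this situation $\min\{m(U), \floor*{c(n)}\} = m(U)$, so the upper bound of Theorem \ref{thm:inc} specialises to
\[
i(G_U) \le \frac{2\left(n^3+1-m(U)\right)}{n^2(n^2+1)}.
\]
Multiplying both sides by the positive quantity $\tfrac{n^2(n^2+1)}{2}$ and isolating $m(U)$ then gives
\[
m(U) \le n^3+1 - \frac{n^2(n^2+1)}{2}\, i(G_U),
\]
which is precisely the second alternative. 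Since the two cases are exhaustive, the disjunction holds for every unital $U$.

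Because this is merely an unpacking of Theorem \ref{thm:inc}, I do not expect any genuine obstacle. The only point meriting a moment's care is that the algebraic rearrangement in the second case preserves the direction of the inequality, which it does because the multiplicative factor $\tfrac{n^2(n^2+1)}{2}$ is positive for all $n \ge 2$.
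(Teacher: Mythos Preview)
Your proposal is correct and matches the paper's intended derivation: the paper presents this corollary as an immediate reinterpretation of the upper bound in Theorem~\ref{thm:inc}, and your case split on whether $m(U) > \floor*{c(n)}$ together with the straightforward rearrangement in the complementary case is exactly how that reinterpretation is unpacked.
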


In other words, either $m(U)$ is large or it can be bounded from above by $i(G_U)$. 

Our second main result is the following formula for the vertex-isoperimetric number of the non-incidence graph of any unital.
 
\begin{theorem}
\label{thm:noninc}
Let $U$ be a unital of order $n \ge 2$. Then
\be
\label{eq:noninc}
i(G_{\overline{U}}) = \begin{cases}
        \frac{4}{5}, & \text{ if } n=2\\  
        \frac{2(n^3+1)}{n^2(n^2+1)}, & \text{ if }  n \ge 3.
    \end{cases}
\ee
\end{theorem}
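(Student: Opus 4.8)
The plan is to work directly with the non-incidence graph $G_{\overline U}$, which is the incidence graph of the complementary design $\overline U$ with parameters $(v,b,b-r,v-k,b-2r+\lambda)=(n^3+1,\,n^4-n^3+n^2,\,n^4-n^3,\,n^3-n,\,n^4-n^3-n^2+1)$; thus $G_{\overline U}$ is bipartite with parts $P$ (the $n^3+1$ points) and $\mathcal{B}$ (the $n^4-n^3+n^2$ lines), a point $p$ being adjacent to a line $\ell$ precisely when $p\notin\ell$, and $|V(G_{\overline U})|=n^4+n^2+1$. For a vertex set $S$ I would split $S=S_P\cup S_{\mathcal{B}}$ with $S_P\subseteq P$, $S_{\mathcal{B}}\subseteq\mathcal{B}$, and write $s=|S_P|$, $t=|S_{\mathcal{B}}|$. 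The engine of the argument is the identity
\[
|N(S)|=(n^4+n^2+1)-|S|-|I(S)|,\qquad |I(S)|=|C\setminus S_P|+|D\setminus S_{\mathcal{B}}|,
\]
where $C=\bigcap_{\ell\in S_{\mathcal{B}}}\ell$ is the set of points on every line of $S_{\mathcal{B}}$ (with $C=P$ if $t=0$) and $D=\{\ell:S_P\subseteq\ell\}$ is the set of lines through every point of $S_P$ (with $D=\mathcal{B}$ if $s=0$); here $I(S)$ is exactly the set of vertices with no neighbour in $S$. Minimising $|N(S)|/|S|$ thus reduces to controlling $|C|$ and $|D|$, which is governed by the defining property $\lambda=1$: two lines meet in at most one point and two points lie on a unique line, so $|C|\le 1$ once $t\ge2$ (with $|C|=n+1$ when $t=1$), and $|D|\le1$ once $s\ge2$ (with $|D|=r=n^2$ when $s=1$).

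For the upper bounds I would exhibit the extremal sets. When $n\ge3$ take $S=S_{\mathcal{B}}$ to be any $\tfrac12 n^2(n^2+1)$ lines (half the vertices, rounded down). Since this exceeds $r=n^2$, no point lies on all of them, so $C=\varnothing$, every point is adjacent to $S$, and $N(S)=P$; the ratio is $(n^3+1)/\big(\tfrac12 n^2(n^2+1)\big)=\tfrac{2(n^3+1)}{n^2(n^2+1)}$. When $n=2$ this only gives $9/10$, so I would instead take $S_P=\{p\}$ together with all $n^4-n^3$ lines missing $p$ plus one line through $p$; then $|S|=10$, the three lines through $p$ left outside $S$ are not adjacent to $p$, and the lines of $S_{\mathcal{B}}$ have no common point, so $N(S)=P\setminus\{p\}$ has size $8$ and the ratio is $4/5$.

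For the lower bound I would run a case analysis on $(s,t)$. The block-only case $s=0$ gives $|N(S)|=(n^3+1)-|C|$, whose minimum over nonempty $S$ with $|S|\le\tfrac12 n^2(n^2+1)$ is exactly $\tfrac{2(n^3+1)}{n^2(n^2+1)}$, attained when $|C|=0$ and $|S|$ is maximal. The point-only case $t=0$ and all cases with $t=1$ leave $|S|=O(n^3)$ while forcing $|N(S)|$ to be of order $b$ or $v$, so they give ratios of order at least $n$, far above either target. When $s,t\ge2$ the bound $|I(S)|\le|C|+|D|\le2$ yields $|N(S)|\ge n^4+n^2-1-|S|$, and since the resulting ratio decreases in $|S|$ its minimum over this case is $\tfrac{n^4+n^2-2}{n^4+n^2}$, which exceeds both $4/5$ and the $n\ge3$ value.

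The crux is the remaining mixed regime $s=1$, $t\ge2$, which is exactly where the $n=2$ optimum lives and where the large value $|D|=n^2$ survives. Writing $x$ for the number of lines of $S_{\mathcal{B}}$ missing the single point $p$ and $y=t-x$ for those through $p$, one has $|D\setminus S_{\mathcal{B}}|=n^2-y$, and the two occurrences of $y$ cancel in the identity, collapsing it to $|N(S)|=n^4-x-|C\setminus\{p\}|$ with $|S|=1+x+y$; enlarging $S_{\mathcal{B}}$ by lines through $p$ inflates $|S|$ without changing $N(S)$, which is the mechanism lowering the ratio. The decisive structural input is that $|C\setminus\{p\}|=1$ forces all lines of $S_{\mathcal{B}}$ through a common point $c\neq p$ and hence $x\le n^2-1$; so whenever $x\ge n^2$ the correction term vanishes, and a short optimisation of $(n^4-x)/(1+x+y)$ over $x\le n^4-n^3$, $y\le n^2$, $1+x+y\le\tfrac12 n^2(n^2+1)$ gives minimum $4/5$ at $(x,y)=(8,1)$ when $n=2$, and value $\tfrac{n^4-n^2+2}{n^4+n^2}$ when $n\ge3$, which is strictly above $\tfrac{2(n^3+1)}{n^2(n^2+1)}$ since $n^4-n^2+2-2(n^3+1)=n^2(n^2-2n-1)>0$ for $n\ge3$. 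I expect this mixed case to be the main obstacle: it is the only place where $|D|=n^2$ is not killed, it produces the genuine exception at $n=2$, and obtaining the sharp bound there—rather than one off by an additive constant—requires precisely the observation that a large $x$ precludes an extra common point.
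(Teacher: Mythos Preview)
Your approach is correct and takes a genuinely different route from the paper's. The paper proves the lower bound by applying the general counting inequalities of Corollary~\ref{lem:keycorollary} to the complementary design $\overline{U}$ (so with $r=n^4-n^3$, $k=n^3-n$, $\lambda=n^4-n^3-n^2+1$), obtaining bounds of the form $y+y'\ge r^2x/(r+\lambda(x-1))$ and $x+x'\ge rky/(r^2-\lambda(b-y))$, and then splits into four cases on the sizes of $x$ and $y$. You instead work directly from the exact identity $|N(S)|=|V|-|S|-|I(S)|$ together with the clean structural description $I(S)=(C\setminus S_P)\cup(D\setminus S_{\mathcal B})$, where the sizes of $C$ and $D$ are controlled by the single fact $\lambda=1$ in $U$. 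Your argument is more elementary---it needs nothing from Section~\ref{sec:lb}---and it explains transparently why the $n=2$ extremum looks the way it does: it is precisely the configuration in which $|D|=n^2$ survives while $|S|$ is padded by lines through $p$. The paper's route has the virtue of reusing machinery already built for Theorem~\ref{thm:inc}; yours is self-contained and structurally sharper. The upper-bound constructions are the same in both.

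One small point to tighten in the $s=1$, $t\ge2$ case: you optimise $(n^4-x)/(1+x+y)$ over the whole box, but that formula equals the ratio only when $x\ge n^2$; for $x\le n^2-1$ the correction $|C\setminus\{p\}|$ may be $1$, and you do not then bound the ratio there. (Indeed, when $|C\setminus\{p\}|=1$ one also has $y\le1$, since only the line through $p$ and $c$ passes through both.) The fix is one line: with $x\le n^2-1$ and $y\le n^2$ one gets ratio at least $(n^4-n^2)/(2n^2)=(n^2-1)/2$, well above either target. Likewise, your dismissal of the $t\le1$ cases as giving ratios ``of order at least $n$'' should be made concrete for $n=2$; the quickest way is to note that for $s\ge2$ one already has $|N(S)\cap\mathcal B|\ge b-2$, forcing ratio at least $(b-2)/(v+1)\ge1$.
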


Theorems \ref{thm:inc} and \ref{thm:noninc} will be proved in sections \ref{sec:inc_unital} and \ref{sec:noninc_unital}, respectively. In order to prove the lower bound in \eqref{eq:lub}, we will first derive lower bounds on the size of $N(S)$ in $G_D$ for a general $2$-design $D$ and any $S$ containing only point vertices or block vertices. These general lower bounds are of interest for their own sake as they may be used in studying the vertex-isoperimetric problem for other families of $2$-designs. 

We conclude this section by mentioning a few results relevant to this paper. Denote by $\Gamma_{n,q}$ the incidence graph of $\PG(n,q)$, where $n \ge 2$ and $q$ is a prime power. These graphs form a major subfamily of the family of $2$-arc transitive Cayley graphs of dihedral groups \cite{DMM08}. Lanphier et al. \cite{lanphier2006expansion} studied the edge-isoperimetric number of $\Gamma_{2,q}$, and Harper and Hergert \cite{harper1994isoperimetric} and Ure \cite{ure1996study} studied the problem of minimizing $|N(X)|$ for a subset of points $X$ in $\PG(2,q)$ with a given size. Determining the precise value of $i(\Gamma_{n,q})$ turns out to be a difficult problem, even when $n=2$ and $q$ is small. With Elvey-Price the last two authors of the present paper determined in \cite{ESZ} the order of magnitude of $1-i(\Gamma_{n,q})$ and found the precise value of $i(\Gamma_{2,q})$ for $q \le 16$. In \cite{HJ}, H\o holdt and Janwa studied eigenvalues and expansion of bipartite graphs. In particular, they obtained lower bounds on the edge-isoperimetric number of $G_D$ in terms of its eigenvalues for any $2$-design $D$.

\section{Lower bounds for $2$-designs}
\label{sec:lb}

We first prove the following general lower bounds.

\begin{theorem}
\label{thm:keytheorem}
Let $D = (P, \mathcal{B})$ be a $2$-$(v, k,\lambda)$ design of $b$ blocks, with $r$ blocks through each point, and let $G$ be the incidence graph $G_D$ of $D$. Let $m$ be any positive integer. Then for any $X \subseteq P$ and $Y \subseteq \mathcal{B}$,  
\be 
\label{eq:main1}
    |N(X)| \ge \frac{2rm-\lambda (|X|-1)}{m(m+1)}|X|; 
\ee
\be
\label{eq:main2}
    |N(X)| \ge \frac{r^2}{r + \lambda(|X|-1)}|X|; 
\ee
\be
\label{eq:main3}
    |N(Y)| \ge \frac{rk}{r^2-\lambda(b-|Y|)}|Y|; 
\ee
\be
\label{eq:main4}
    |N(X) \setminus Y| \ge \frac{4\lambda}{k^2}|X||P \setminus (X \cup N(Y))|. 
\ee
Moreover, equality in \eqref{eq:main1} holds if every block vertex in $N(X)$ contains exactly $m$ or $m+1$ points in $X$.
\end{theorem}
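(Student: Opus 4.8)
The plan is to treat the four inequalities separately, but all four rest on the same pair of double-counting identities. For $X\subseteq P$ and a block $B$, write $d_X(B)=|B\cap X|$. Counting incidences between $X$ and the blocks, and counting pairs of points of $X$ against the blocks through them, gives
\[
\sum_{B\in N(X)} d_X(B)=r|X|,\qquad \sum_{B\in N(X)} d_X(B)\bigl(d_X(B)-1\bigr)=\lambda|X|(|X|-1),
\]
where the sums may be taken over $N(X)$ since $d_X(B)=0$ for $B\notin N(X)$. These two identities are the engine behind \eqref{eq:main1} and \eqref{eq:main2}.

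For \eqref{eq:main1} I would exploit that $d_X(B)$ is an integer: for every integer $d$ one of $d\le m$ or $d\ge m+1$ holds, so $(d-m)(d-m-1)\ge 0$, i.e.\ $d(d-1)\ge 2md-m(m+1)$. Applying this with $d=d_X(B)$ and summing over $B\in N(X)$, the left side becomes $\lambda|X|(|X|-1)$ and the right side becomes $2mr|X|-m(m+1)|N(X)|$; rearranging yields \eqref{eq:main1}. The inequality $(d-m)(d-m-1)\ge 0$ is an equality exactly when $d\in\{m,m+1\}$, which gives the stated equality condition. For \eqref{eq:main2} I would instead apply Cauchy--Schwarz in the form $\bigl(\sum_{B\in N(X)} d_X(B)\bigr)^2\le |N(X)|\sum_{B\in N(X)} d_X(B)^2$; the two identities evaluate the left side as $r^2|X|^2$ and $\sum d_X(B)^2$ as $|X|\bigl(r+\lambda(|X|-1)\bigr)$, and solving for $|N(X)|$ gives the bound.

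The main obstacle is \eqref{eq:main3}, because the block side of a $2$-design is not itself a $2$-design, so the moment identities above have no direct dual. My plan is to pass to the complement. Put $Z=P\setminus N(Y)$, the points lying on no block of $Y$, so that $|N(Y)|=v-|Z|$, and let $Y^{c}=\mathcal B\setminus Y$. Each point of $Z$ lies on exactly $r$ blocks, all of them in $Y^{c}$; hence, writing $z_B=|B\cap Z|$ for $B\in Y^{c}$, I get $\sum_{B\in Y^{c}} z_B=r|Z|$ and $\sum_{B\in Y^{c}} z_B(z_B-1)\le \lambda|Z|(|Z|-1)$, the last step using that each pair of points of $Z$ lies on at most $\lambda$ blocks of $Y^{c}$. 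Feeding these into Cauchy--Schwarz $\bigl(\sum z_B\bigr)^2\le|Y^{c}|\sum z_B^2$ produces an upper bound on $|Z|$, namely $|Z|\le (r-\lambda)(b-|Y|)\big/\bigl(r^2-\lambda(b-|Y|)\bigr)$ when $r^2-\lambda(b-|Y|)>0$. The final and most delicate step is to simplify $v-|Z|$: substituting $bk=vr$ and $r(k-1)=\lambda(v-1)$ collapses $v-\tfrac{(r-\lambda)(b-|Y|)}{r^2-\lambda(b-|Y|)}$ exactly to $\tfrac{rk|Y|}{r^2-\lambda(b-|Y|)}$, which is \eqref{eq:main3}; I expect keeping track of these parameter relations to be where the bookkeeping is easiest to slip.

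Finally, \eqref{eq:main4} I would prove by a direct pair count. Set $W=P\setminus(X\cup N(Y))$ and consider pairs $(p,w)$ with $p\in X$ and $w\in W$; since $X$ and $W$ are disjoint these are genuine $2$-subsets, and they account for $\lambda|X||W|$ incidences with common blocks. The key observation is that any block $B$ through such a pair lies in $N(X)\setminus Y$: it meets $X$, so $B\in N(X)$, and if $B\in Y$ then $w\in N(Y)$, contradicting $w\in W$. Thus all these incidences are carried by blocks of $N(X)\setminus Y$, while a single such block $B$ carries $|B\cap X|\cdot|B\cap W|$ of them, which is at most $k^2/4$ by the AM--GM inequality since $|B\cap X|+|B\cap W|\le k$. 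Comparing the two counts gives $\lambda|X||W|\le \tfrac{k^2}{4}|N(X)\setminus Y|$, which rearranges to \eqref{eq:main4}.
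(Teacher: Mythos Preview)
Your argument is correct and, for \eqref{eq:main1}, \eqref{eq:main3}, and \eqref{eq:main4}, essentially identical to the paper's: the same double counts, the same complement $Z=P\setminus N(Y)$, and the same pair count through blocks of $N(X)\setminus Y$. The one genuine difference is \eqref{eq:main2}: the paper deduces it from \eqref{eq:main1} by setting $m=\lfloor\lambda(|X|-1)/r\rfloor+1$ and checking an algebraic inequality, while your direct Cauchy--Schwarz $(\sum d_X(B))^2\le|N(X)|\sum d_X(B)^2$ is shorter and avoids the integer optimization altogether. One small gap to close: in your treatment of \eqref{eq:main3} you invoke the hypothesis $r^2-\lambda(b-|Y|)>0$ without justifying it; the paper notes that $r^2/(\lambda b)=k(v-1)/\bigl(v(k-1)\bigr)>1$, whence $r^2>\lambda b\ge\lambda(b-|Y|)$, and you should include this line. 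As an aside, your inequality $\sum_{B\in Y^{c}} z_B(z_B-1)\le\lambda|Z|(|Z|-1)$ is actually an equality, since any block through two points of $Z$ already lies in $Y^{c}$; but the inequality is all you need.
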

\begin{proof}
Denote by $F$ the set of edges of $G$ with one end-vertex in $X$ and the other end-vertex in $N(X)$. By double counting, we have
	\be
\label{eq:rx}
		r|X| = |F| = \smashoperator[r]{\sum_{B \in N(X)}} |B \cap X|.
	\ee
Denote by $Q$ the set of paths of length two in $G$ with both end-vertices in $X$. By double counting again, we obtain
	\be
\label{eq:xx}
		\lambda |X|(|X|-1) = |Q| = \smashoperator[r]{\sum_{B \in N(X)}} |B \cap X|(|B \cap X| - 1).
	\ee
Since $n^2 \ge n$ for any integer $n$, by \eqref{eq:rx} and \eqref{eq:xx} we obtain
	\begin{align*}
		0 &\le \smashoperator[r]{\sum_{B \in N(X)}}\left\{(|B \cap X| - m)^2 - (|B \cap X| - m)\right\}\\
		&= \smashoperator[r]{\sum_{B \in N(X)}} \left\{|B \cap X|(|B \cap X| - 1) - 2m|B \cap X| + m(m+1)\right\}\\
		&= \lambda |X|(|X| - 1) - 2rm|X| + m(m+1)|N(X)|.
	\end{align*}
Rearranging this inequality yields \eqref{eq:main1}. In addition, if $|B \cap X| \in \{m,m+1\}$ for each $B \in N(X)$, then the inequality above is sharp and so equality in \eqref{eq:main1} is achieved. 

Take $m=\left\lfloor\frac{\lambda(|X|-1)}{r}\right\rfloor + 1$. Then  
	\[
		\left(\frac{\lambda(|X|-1)}{r}+1-m\right)\left(m-\frac{\lambda(|X|-1)}{r}\right) \ge 0.
	\]
That is,
	\[
		\left(\frac{\lambda(|X|-1)}{r}+1\right)\left(2m-\frac{\lambda(|X|-1)}{r}\right) \ge m(m+1).
	\]
	Rearranging this inequality yields
	\[
		\frac{2rm-\lambda(|X|-1)}{m(m+1)} \ge \frac{r^2}{r+\lambda(|X|-1)}.
	\]
	This together with \eqref{eq:main1} implies \eqref{eq:main2}.
	
We now prove \eqref{eq:main3}. If $N(Y) = P$, then
	\[
	    |N(Y)| = v \ge \frac{rk|Y|}{r^2-\lambda(b-|Y|)}
    \]
and \eqref{eq:main3} holds. Assume that $N(Y) \neq P$. Then $Y \ne \cal B$ and so $b > |Y|$. Note that $N(P \setminus N(Y)) \subseteq \mathcal{B} \setminus Y$. This together with \eqref{eq:main2} for $X = P \setminus N(Y)$ yields
\be
\label{eq:bY}
		b - |Y| \ge |N(P \setminus N(Y))| \ge \frac{r^2}{r+\lambda (v - |N(Y)| - 1)} (v - |N(Y)|) = \frac{r^2}{\frac{r - \lambda}{v - |N(Y)|} + \lambda}.
\ee
Since $vr=bk$ and $r(k-1) = \lambda(v-1)$, we have $\frac{r^2}{\lambda b} = \frac{k(v-1)}{v(k-1)} > 1$. So $\frac{r^2}{b-|Y|} - \lambda > 0$. Thus, by \eqref{eq:bY},  
	\begin{align*}
		|N(Y)| &\ge v - \frac{r-\lambda}{\frac{r^2}{b-|Y|} - \lambda}\\
		&= \frac{vr^2-(b-|Y|)(\lambda(v-1)+r)}{r^2-\lambda(b-|Y|)}\\
		&= \frac{rk}{r^2-\lambda(b-|Y|)}|Y|
	\end{align*}
and \eqref{eq:main3} is established.

Finally, let $A$ be the set of paths of length two in $G$ with one end-vertex in $X$ and the other end-vertex in $P \setminus (X \cup N(Y))$. Since each pair of points in $X \times (P \setminus (X \cup N(Y)))$ are contained in exactly $\lambda$ blocks, we have
	\begin{equation}
		|A| = \lambda|X||P \setminus (X \cup N(Y))|. \label{eq:firstAequation}
	\end{equation}
	
	Consider a block $B$ that is the middle vertex of a path in $A$. Since $B$ is adjacent to a point in $X$, we have $B \in N(X)$. Similarly, $B$ is adjacent to a point outside $N(Y)$, and so $B \notin Y$. Hence $B \in N(X) \setminus Y$. Denote by $a_B$ the number of points in $X$ adjacent to $B$ in $G$, and $b_B$ the number of points in $P \setminus (X \cup N(Y))$ adjacent to $B$ in $G$. Then $a_B + b_B \le k$ and so $a_B b_B \le \left(\frac{k}{2}\right)^2$. It follows that each such block $B$ can contribute at most $\left(\frac{k}{2}\right)^2$ paths to $A$. Since there are at most $|N(X) \setminus Y|$ such blocks, it follows that
	\[
		|A| \le \frac{k^2}{4} |N(X) \setminus Y|.
	\]
	This together with \eqref{eq:firstAequation} implies \eqref{eq:main4}.
\qed
\end{proof}

We now introduce notations that will be used in subsequent discussions. Let $D = (P, \mathcal{B})$ be a $2$-design and $G$ its incidence graph. If $X \subseteq P$ and $Y \subseteq \mathcal{B}$, set
\begin{equation}
\label{eq:shorthand}
x=|X|,\quad y=|Y|,\quad x'=|N(Y) \setminus X|, \quad y'=|N(X) \setminus Y|.
\end{equation}
Thus, if $S$ is a subset of the vertex set of $G$, by setting $X = S \cap P$ and $Y = S \cap \mathcal{B}$, we have
$$
|S|=x+y,\;\, |N(S)|=x'+y'.
$$

\begin{corollary}
\label{lem:keycorollary}
Let $D = (P, \mathcal{B})$ be a $2$-$(v, k,\lambda)$ design of $b$ blocks, with $r$ blocks through each point, and let $G$ be the incidence graph $G_D$ of $D$. Let $m$ be any positive integer. Then for any $X \subseteq P$ and $Y \subseteq \mathcal{B}$, and $x$, $y$, $x'$ and $y'$ as defined in \eqref{eq:shorthand}, 
$$
    y+y' \ge \frac{x(2rm-\lambda(x-1))}{m(m+1)}; 
$$
$$
    y+y' \ge \frac{r^2 x}{r+\lambda(x-1)};
$$
$$
    x+x' \ge \frac{rky}{r^2-\lambda(b-y)}; 
$$
$$
    y' \ge \frac{4\lambda}{k^2}(v-x-x').
$$
\end{corollary}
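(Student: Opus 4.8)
The plan is to read Corollary~\ref{lem:keycorollary} directly off Theorem~\ref{thm:keytheorem}, the only work being to rewrite the cardinalities $|N(X)|$ and $|N(Y)|$ in terms of the shorthand \eqref{eq:shorthand}. The one idea behind all four inequalities is that $G$ is bipartite with parts $P$ and $\mathcal{B}$, so the neighbourhood of a point set $X$ lies entirely in $\mathcal{B}$ and the neighbourhood of a block set $Y$ lies entirely in $P$. Consequently each neighbourhood splits against the ``opposite'' set: $N(X)$ against $Y$, and $N(Y)$ against $X$.

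For the first two inequalities I would decompose $N(X)$ as the disjoint union $(N(X)\cap Y)\cup(N(X)\setminus Y)$. The second part has size $y'$ by \eqref{eq:shorthand}, and the first is contained in $Y$ and so has size at most $y$; hence $|N(X)|\le y+y'$. Because Theorem~\ref{thm:keytheorem} bounds $|N(X)|$ from \emph{below}, this goes in the right direction, and combining with \eqref{eq:main1} and \eqref{eq:main2} (where $|X|=x$) gives
\[
y+y' \ge |N(X)| \ge \frac{x\,(2rm-\lambda(x-1))}{m(m+1)},
\qquad
y+y' \ge |N(X)| \ge \frac{r^2 x}{r+\lambda(x-1)},
\]
which are the first two claimed bounds. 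The third inequality is symmetric: writing $N(Y)=(N(Y)\cap X)\cup(N(Y)\setminus X)$ gives $|N(Y)|\le x+x'$, and feeding this into \eqref{eq:main3} (with $|Y|=y$) yields $x+x'\ge \frac{rky}{r^2-\lambda(b-y)}$.

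For the last inequality I would first rewrite the set $P\setminus(X\cup N(Y))$ appearing in \eqref{eq:main4}. Since $X$ and $N(Y)\setminus X$ are disjoint with union $X\cup N(Y)$, we have $|X\cup N(Y)|=x+x'$ and therefore $|P\setminus(X\cup N(Y))|=v-x-x'$. Thus \eqref{eq:main4} becomes $y'\ge \frac{4\lambda}{k^2}\,x\,(v-x-x')$. As $v-x-x'$ is a cardinality and hence nonnegative, and $x=|X|\ge 1$ for nonempty $X$, the factor $x$ may be dropped to obtain the stated bound $y'\ge \frac{4\lambda}{k^2}(v-x-x')$.

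I do not expect a genuine obstacle: the corollary is a notational repackaging of Theorem~\ref{thm:keytheorem}. The only points that require a moment's attention are directional—one must confirm that enlarging $|N(X)|$ to $y+y'$ and $|N(Y)|$ to $x+x'$ is compatible with lower bounds (it is, since the theorem bounds these quantities below)—and, in the final inequality, that the factor $|X|=x\ge1$ is what licenses passing from \eqref{eq:main4} to the weaker stated form.
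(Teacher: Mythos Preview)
Your approach is essentially identical to the paper's, which simply records that $y+y' \ge |N(X)|$ and $x+x' \ge |N(Y)|$ and then reads each inequality off Theorem~\ref{thm:keytheorem}. Your treatment of the fourth inequality is in fact more careful than the paper's one-line proof: the stated bound $y' \ge \frac{4\lambda}{k^2}(v-x-x')$ drops a factor of $x$ from \eqref{eq:main4} (and the paper's own application in Section~\ref{sec:inc_unital} actually uses the form with the factor $x$ present), so your observation that $x\ge 1$ for nonempty $X$ legitimately recovers the weaker stated form.
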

\begin{proof}
Since $y+y' \ge |N(X)|$ and $x+x' \ge |N(Y)|$, each inequality follows from the corresponding inequality in Theorem \ref{thm:keytheorem} immediately.
\end{proof}


\section{Proof of Theorem \ref{thm:inc}}
\label{sec:inc_unital}

Throughout this section we assume that $U=(P,\mathcal{B})$ is a unital of order $n \ge 2$ and $G$ its incidence graph $G_U$. Denote the vertex set of $G$ by $V = P \cup \mathcal{B}$. Recall that the parameters of $U$ are given by \eqref{eq:uni}. 
Since $r = n^2$ and $\lambda = 1$, applying \eqref{eq:main1} to $U$ and $m = 1$ yields $|N(X)| \ge n^2 x - \frac{x (x - 1)}{2}$ for $X \subseteq P$ with $|X| = x$. Motivated by this, we define $g: [0,n^2+1] \rightarrow \left[0,\frac{(n^2+1)(n^2+2)}{2}\right]$ by
\[
g(z) = (n^2 + 1)z - \frac{z(z-1)}{2}. 
\]
Then $|N(X)| \ge g(x)-x$ for any subset $X$ of $P$ with size $x$. Moreover, by Theorem \ref{thm:keytheorem}, $|N(X)| = g(x)-x$ if $X$ is an $x$-arc of $U$ since every line in $N(X)$ contains exactly one or two points in $X$. It can be verified that the function $g$ is increasing, concave, and bijective in the domain $[0,n^2+1]$. The function $c(n)$ introduced in \eqref{eq:x*n} can be expressed as 
\[
c(n) = g^{-1}\left(\frac{n^2 (n^2 + 1)}{2}\right).
\]

\subsection{Upper bound}

We now prove the upper bound in Theorem \ref{thm:inc} by an explicit construction. Denote 
$$
x = \min\{m(U), \floor*{c(n)}\}.
$$ 
Take a subset $X$ of some $m(U)$-arc of $U$ with size $x$. Then $X$ is an $x$-arc of $U$. Since $g$ is increasing in $[0,n^2+1]$, and $x \le c(n)$, from the discussion above we obtain 
	\[
	|N(X)| = g(x)-x \le g\left(c(n)\right)-x = \frac{n^2 (n^2 + 1)}{2}-x.
	\]
So we can take a subset $Y$ of $\mathcal{B}$ such that $N(X) \subseteq Y$ and $|Y| = \frac{n^2 (n^2 + 1)}{2} - x$. It then follows that
	\[
	N(X \cup Y) = N(Y) \subseteq P \setminus X.
	\]
Hence
	\[
	\frac{|N(X \cup Y)|}{|X \cup Y|} \le \frac{n^3+1-x}{\frac{n^2 (n^2 + 1)}{2}}.
	\]
From this the upper bound in \eqref{eq:lub} follows immediately. 

\subsection{Lower bound}

The lower bound in Theorem \ref{thm:inc} will be proved by bounding $|N(S)|$ from below for an arbitrary $S \subseteq V$ with $1 \le |S| \le \frac{|V|}{2}$. Let $X = S \cap P$ and $Y = S \cap \mathcal{B}$, and let $x$, $y$, $x'$, and $y'$ be defined as in \eqref{eq:shorthand}. Then $|S| = x+y$ and $|N(S)| = x'+y'$. Since $|S| \le \frac{|V|}{2}$, we have $x+y \le \frac{n^2 (n^2+1)}{2}$. Setting $m = 1$ in Corollary \ref{lem:keycorollary}, we obtain
$$
y + y' \ge \frac{x(2n^2+1-x)}{2};
$$
$$
y + y' \ge \frac{n^4 x}{n^2 - 1 + x};
$$
$$
x + x' \ge \frac{n^2(n+1)y}{n^2(n-1)+y};  
$$
$$
y' \ge \frac{4x(n^3+1-x-x')}{(n+1)^2}.
$$
  
Define
\[
f(x,y) = \frac{4x}{(n+1)^2}(n^3+1) + \left(1 - \frac{4x}{(n+1)^2}\right)\frac{n^2(n+1)y}{n^2(n-1)+y} - x.
\]
It is readily seen that $f(x,y)$ is linear in $x$ for a fixed $y$, and monotonic in $y$ for a fixed $x$.  

\begin{lemma}
\label{lem:two}
If either 
\be
\label{eq:cond1}
x \le \frac{(n+1)^2}{4} \quad\text{and}\quad \frac{f(x,y)}{x+y} \ge \frac{2(n^3+1-\floor*{c(n)})}{n^4+n^2},
\ee
or
\be
\label{eq:cond2}
x > \frac{(n+1)^2}{4} \quad\text{and}\quad x - \left(1-\frac{(n+1)^2}{4x}\right)y' < \floor*{c(n)}+1, 
\ee
then
\be
\label{eq:xyxy}
\frac{x'+y'}{x+y} \ge \frac{2(n^3+1-\floor*{c(n)})}{n^2(n^2 + 1)}.
\ee
\end{lemma}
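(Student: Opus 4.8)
The plan is to produce a single lower bound for $x'+y'$ and then read off \eqref{eq:xyxy} according to whether $x$ is small or large, which is exactly the dichotomy encoded in \eqref{eq:cond1} and \eqref{eq:cond2}. Writing $\alpha=\frac{4x}{(n+1)^2}$, the fourth of the four displayed inequalities, namely $y'\ge\alpha(n^3+1-x-x')$, rearranges to
\[
x'+y'\ge x'(1-\alpha)+\alpha(n^3+1-x),
\]
and the whole argument hinges on the sign of the coefficient $1-\alpha$ of $x'$, which is non-negative precisely when $x\le\frac{(n+1)^2}{4}$.

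First I would handle the case $x\le\frac{(n+1)^2}{4}$, so that $1-\alpha\ge 0$. Here it is legitimate to substitute the lower bound $x'\ge\frac{n^2(n+1)y}{n^2(n-1)+y}-x$ (the third displayed inequality, valid since $x+x'\ge|N(Y)|$) into the term $x'(1-\alpha)$, because multiplying an inequality by the non-negative number $1-\alpha$ preserves its direction. A routine expansion then shows that the resulting expression collapses exactly to $f(x,y)$; that is, $x'+y'\ge f(x,y)$. Dividing by $x+y>0$, invoking the displayed inequality in \eqref{eq:cond1}, and using the identity $n^4+n^2=n^2(n^2+1)$ gives \eqref{eq:xyxy} immediately.

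Next I would treat the complementary case $x>\frac{(n+1)^2}{4}$, where $\alpha>1$ and the coefficient $1-\alpha$ is negative, so substituting a lower bound for $x'$ is no longer valid. Instead I would solve the fourth inequality for $x'$, obtaining $x'\ge(n^3+1-x)-y'/\alpha$, which yields
\[
x'+y'\ge n^3+1-\left[x-\left(1-\tfrac{(n+1)^2}{4x}\right)y'\right].
\]
The bracketed quantity is precisely the one appearing in \eqref{eq:cond2}, which bounds it strictly below $\floor*{c(n)}+1$; since $x'+y'$ is an integer, this forces $x'+y'\ge n^3+1-\floor*{c(n)}$. Combining this with the size constraint $x+y\le\frac{n^2(n^2+1)}{2}$ on $S$ then gives \eqref{eq:xyxy}.

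The only genuine subtlety is the sign of $1-\alpha$: it dictates which of the two structurally different bounds on $x'$ one is permitted to feed into the estimate, and this is exactly why the lemma is stated as a dichotomy rather than a single inequality. The integrality step in the second case --- upgrading a strict inequality between two integers to a full unit gain --- is elementary but essential, since it is what makes the sharp constant $n^3+1-\floor*{c(n)}$ appear rather than something slightly smaller.
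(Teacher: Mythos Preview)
Your proof is correct and follows essentially the same route as the paper: in both cases you split $x'+y'$ using the fourth inequality $y'\ge\frac{4x}{(n+1)^2}(n^3+1-x-x')$, then according to the sign of $1-\frac{4x}{(n+1)^2}$ either feed in the third inequality for $x'$ to reach $f(x,y)$, or isolate $x'$ directly to obtain $x'+y'>n^3-\floor*{c(n)}$ and finish by integrality together with $x+y\le\frac{n^2(n^2+1)}{2}$. Your commentary on why the dichotomy is governed by the sign of $1-\alpha$ is a nice clarification of the structure, but the underlying computation is the same as the paper's.
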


\begin{proof}
If \eqref{eq:cond1} holds, then
	\begin{align*}
	x'+y' &= \left(\frac{4x}{(n+1)^2}x' + y'\right) + \left(1 - \frac{4x}{(n+1)^2}\right) x'\\
	&\ge \frac{4x}{(n+1)^2}\left(n^3+1-x\right) + \left(1 - \frac{4x}{(n+1)^2}\right)\left(\frac{n^2(n+1)y}{n^2(n-1)+y}-x\right)\\
	&= f(x,y),
	\end{align*}
which together with \eqref{eq:cond1} implies \eqref{eq:xyxy}. 
	 	
On the other hand, if \eqref{eq:cond2} holds, then
	\begin{align*}
	x'+y' &= \left(x' + \frac{(n+1)^2}{4x}y'\right) + \left(1 - \frac{(n+1)^2}{4x}\right) y'\\
	&\ge n^3+1-x + \left(1 - \frac{(n+1)^2}{4x}\right)y'\\
	&> n^3-\floor{c(n)}.
	\end{align*}
Thus, $x'+y' \ge n^3+1-\floor*{c(n)}$, which together with $x+y \le \frac{n^2 (n^2+1)}{2}$ yields \eqref{eq:xyxy}.
\qed
\end{proof}

Now we are ready to prove the lower bound in Theorem \ref{thm:inc}. Let us begin with the case when $n = 2$. In this case we have $x+y \le \frac{n^2 (n^2+1)}{2} = 10$. Moreover, condition \eqref{eq:cond1} is reduced to
$$
	x \le 2 \quad\text{and}\quad \frac{f(x,y)}{x+y} \ge \frac{7}{10}, 
$$
and condition \eqref{eq:cond2} is reduced to
$$
	x \ge 3 \quad\text{and}\quad x - \left(1-\frac{9}{4x}\right)y' < 3. 
$$
By Lemma \ref{lem:two} it suffices to verify that one of these conditions holds. In fact, if $x=0$, then $\frac{f(x,y)}{x+y} = \frac{12}{4+y} > \frac{7}{10}$. If $x=1$, then $y \le 9$, so that $\frac{f(x,y)}{x+y} = \frac{29y+36}{3y^2+15y+12} > \frac{7}{10}$. If $x=2$, then $x'+y' \ge \frac{4x}{9}x'+y' \ge \frac{4x}{9}(9-x) > 6$, so that $\frac{x'+y'}{x+y} \ge \frac{7}{10}$. Finally, if $x \ge 3$, then
		\begin{align*}
		x - \left(1-\frac{9}{4x}\right)y'
		&= x - \left(1-\frac{9}{4x}\right)\left(x+(y+y')-(x+y)\right)\\
		&\le x - \left(1-\frac{9}{4x}\right)\left(x+\frac{16x}{3+x}-10\right)\\
		&= \frac{84}{3+x} - \frac{45}{2x} - \frac{15}{4}\\
		&< 3. \qedhere
		\end{align*}
Therefore, the lower bound in \eqref{eq:lub} holds when $n=2$. 

In the rest of this section we assume that $n \ge 3$. Then $8n^2+9 \le (3n)^2$ and hence
$$
c(n) \ge n^2-\frac{3}{2}(n-1) \ge \frac{n(n+1)}{2} \ge \frac{(n+1)^2}{4}.
$$
We consider the following five cases one by one. In each case we show that either \eqref{eq:cond1} or \eqref{eq:cond2} is satisfied and so the lower bound in \eqref{eq:lub} follows immediately from Lemma \ref{lem:two} and the arbitrariness of $S \subseteq V$ with $1 \le |S| \le \frac{|V|}{2}$. 

\smallskip
\textsf{Case 1:} $x \le \frac{(n+1)^2}{4}$ and $y \le \frac{n^4-n^3+n^2}{2}$. 

Since $f(x,y)$ is linear in $x$, we have
	\begin{align*}
	\frac{f(x,y)}{x+y} &\ge \min\left\{ \frac{f(0,y)}{y}, \frac{f\left(\frac{(n+1)^2}{4},y\right)}{\frac{(n+1)^2}{4}+y} \right\}\\
	&= \min\left\{ \frac{n^2(n+1)}{n^2(n-1)+y}, \frac{n^3+1-\frac{(n+1)^2}{4}}{\frac{(n+1)^2}{4} + y} \right\}\\
	&\ge \min\left\{ \frac{2(n+1)}{n^2+n-1}, \frac{(n+1)(4n^2-5n+3)}{(n+1)^2+2(n^4-n^3+n^2)} \right\}\\
	&\ge \frac{2}{n}\\
	&\ge \frac{2 \left(n^3+1-\floor*{c(n)}\right)}{n^2 (n^2 + 1)}.
	\end{align*}
Hence condition \eqref{eq:cond1} is satisfied. 

\smallskip
\textsf{Case 2:} $x \le \frac{(n+1)^2}{4}$ and $y > \frac{n^4-n^3+n^2}{2}$. 

Since $f(x,y)$ increases with $y$, we have
	\begin{align*}
	f(x,y) &\ge f\left(x, \frac{n^4-n^3+n^2}{2}\right)\\
	&= \frac{4x}{(n+1)^2}(n^3+1) + \left(1-\frac{4x}{(n+1)^2}\right)\frac{n^2(n^3+1)}{n^2+n-1} - x\\
	&= \left( \frac{4(n^3+1)}{(n+1)^2} - \frac{4n^2(n^3+1)}{(n+1)^2(n^2+n-1)} - 1\right)x + \frac{n^2(n^3+1)}{n^2+n-1}\\
	&= \left(\frac{4(n^3+1)(n-1)}{(n+1)^2(n^2+n-1)} - 1\right)x + \frac{n^2(n^3+1)}{n^2+n-1}.
	\end{align*}
Since $n \ge 3$, one can verify that the coefficient of $x$ in the last line is positive. Hence
$$
f(x,y) \ge \frac{n^2(n^3+1)}{n^2+n-1} \ge n^3+1-\floor*{c(n)}.
$$ 
This together with $x+y \le \frac{n^2 (n^2+1)}{2}$ yields
	\[
	\frac{f(x,y)}{x+y} \ge \frac{2 \left(n^3+1-\floor*{c(n)}\right)}{n^2 (n^2 + 1)}.
	\]
That is, condition \eqref{eq:cond1} is satisfied. 
	
\smallskip
\textsf{Case 3:} $\frac{(n+1)^2}{4} < x \le c(n)$. 

Then
	\[
	x - \left(1-\frac{(n+1)^2}{4x}\right)y' \le x \le c(n) < \floor*{c(n)}+1
	\]
and so condition \eqref{eq:cond2} is satisfied. 

\smallskip
\textsf{Case 4:} $c(n) < x \le n^2$. 

Since $g$ is concave and $c(n) \ge \frac{n(n+1)}{2}$, we have
	\begin{align*}
	\frac{g(x)-g(c(n))}{x-c(n)} &\ge \frac{g(n^2)-g(c(n))}{n^2-c(n)}\\
	&= \frac{1}{1-\frac{c(n)}{n^2}}\\
	&\ge \frac{1}{1 - \frac{(n+1)^2}{4c(n)}}\\
	&\ge \frac{1}{1-\frac{(n+1)^2}{4x}}.
	\end{align*}
On the other hand,
	\begin{align*}
	y' &\ge \frac{x(2n^2+1-x)}{2}-y\\
	&\ge \frac{x(2n^2+1-x)}{2}+x-\frac{n^4+n^2}{2}\\
	&= g(x)-g(c(n)).
	\end{align*}
So
	\[
	x -\left(1-\frac{(n+1)^2}{4x}\right)y' \le x-(x-c(n)) = c(n) < \floor*{c(n)}+1
	\]
and \eqref{eq:cond2} is satisfied. 
	
\smallskip
\textsf{Case 5:}  $x > n^2$. 

Let
	\[
	h(z) = z - \left(1-\frac{(n+1)^2}{4z}\right)\left(\frac{n^4 z}{n^2-1+z}+z-\frac{n^4+n^2}{2}\right).
	\]
	Then
	\[
	x - \left(1-\frac{(n+1)^2}{4x}\right)y' \le h(x).
	\]
Thus, to show that \eqref{eq:cond2} is satisfied, it suffices to prove $h(x)\le c(n)$. We achieve this by showing that $h(n^2) \le c(n)$ and that the function $h(z)$ is decreasing with $z$. 

To justify the first claim, we notice that $(3n^2-1)(3n+1) \ge 12(2n^2-1)$. Hence
	\begin{align*}
	h(n^2) &= \left(1-\frac{(n+1)^2}{4n^2}\right)\left(\frac{n^6}{2n^2-1}+n^2-\frac{n^4+n^2}{2}\right)\\
	&= n^2 - \frac{(3n^2-1)(3n+1)(n-1)}{8(2n^2-1)}\\
	&= n^2 - \frac{3n^2-1}{2n^2-1} \frac{3n+1}{8} (n-1)\\
	&\le n^2 - \frac{3}{2}(n-1)\\
	&\le c(n).
	\end{align*}
To prove the second claim, we take the derivative
	\begin{align*}
	\frac{dh}{dz} &= 1 - \frac{(n+1)^2}{4z^2}\left(\frac{n^4 z}{n^2-1+z}+z-\frac{n^4+n^2}{2}\right)\\
	&\quad\quad- \left(1-\frac{(n+1)^2}{4z}\right)\left(\frac{n^4(n^2-1)}{(n^2-1+z)^2}+1\right)\\
	&= \frac{n^2(n+1)}{8} \left(\frac{(n^2+1)(n+1)}{z^2} - \frac{2n^2(5n-3)}{(n^2-1+z)^2}\right).
	\end{align*}
It can be easily verified that $n^2(5n-3) \ge 2(n^2+1)(n+1)$. Thus
	\[
	\frac{2n^2(5n-3)}{(n^2+1)(n+1)} \ge 4 \ge \left(\frac{n^2-1}{z}+1\right)^2
	\]
and therefore $\frac{dh}{dz} \le 0$. So $h(z)$ is decreasing with $z$. 

In summary, in each case above either \eqref{eq:cond1} or \eqref{eq:cond2} holds for any $S \subseteq V$ with $1 \le |S| \le \frac{|V|}{2}$. Thus, by Lemma \ref{lem:two}, we obtain the lower bound in \eqref{eq:lub}. 

This completes the proof of Theorem \ref{thm:inc}.

\section{Proof of Theorem \ref{thm:noninc}}
\label{sec:noninc_unital}

In this section we assume that $U=(P,\mathcal{B})$ is a unital of order $n$ and $G$ its non-incidence graph $G_{\overline{U}}$. Denote the vertex set of $G$ by $V = P \cup \mathcal{B}$.  

We first prove that the right-hand side of \eqref{eq:noninc} is an upper bound for $i(G)$. Consider the case $n=2$. Pick any point vertex $p \in P$. There are exactly eight blocks incident with this point. Let $S \subseteq V$ consist of this point $p$, the eight blocks incident to it, and one other block. Then $|S|=10$ and $|N(S)| \le 8$ since $N(S) \subseteq P \setminus \{p\}$. Hence $i(G) \le \frac{|N(S)|}{|S|} \le \frac{4}{5}$. Now assume that $n \ge 3$. Let $S$ be any subset of $\mathcal{B}$ of size $\frac{n^4+n^2}{2}$. Then $|N(S)| \le n^3+1$ as $N(S) \subseteq P$. Therefore, $i(G) \le \frac{|N(S)|}{|S|} \le \frac{2(n^3+1)}{n^4+n^2}$.

Next we prove that the right-hand side of \eqref{eq:noninc} is also a lower bound for $i(G)$. Let $S$ be an arbitrary subset of $V$ with $1 \le |S| \le \frac{|V|}{2}$, and let $X = S \cap P$ and $Y = S \cap \mathcal{B}$. Let $x$, $y$, $x'$, and $y'$ be defined as in \eqref{eq:shorthand}. In view of \eqref{eq:uni}, the parameters of $\overline{U}$ are $(v,b,r,k,\lambda) = (n^3+1, n^4-n^3+n^2, n^4-n^3, n^3-n, n^4-n^3-n^2+1)$.

Consider the case $n=2$ first, for which the parameters of $\overline{U}$ are $(v,b,r,k,\lambda) = (9,12,8,6,5)$. In this case we have $x+y \le 10$, and the second and third inequalities of Corollary \ref{lem:keycorollary} reduce to $y+y' \ge \frac{64x}{3+5x}$ and $x+x' \ge \frac{48y}{4+5y}$ respectively. In order to prove $i(G) \ge \frac{4}{5}$, it suffices to show that $\frac{x'+y'}{x+y} \ge 1$ or $x' + y' \ge 8$. We split this into four cases:
\begin{enumerate}[(a)]
	\item If $x \le 1$ and $y \le 4$, then $y+y' \ge \frac{64x}{3+5x} \ge 2x$ and $x+x' \ge \frac{48y}{4+5y} \ge 2y$ so that
	$\frac{x'+y'}{x+y} = \frac{(x+x')+(y+y')}{x+y} - 1 \ge 1$.
	\item If $x \le 1$ and $y \ge 5$, then $x+x' \ge \frac{240}{29} > 8$. Since $x+x'$ is an integer, it follows that $x+x'\ge 9$ and thus $x' \ge 8$.
	\item If $x \ge 2$ and $y \le 1$, then $y+y' \ge \frac{128}{13} > 10$. This implies that $y+y' \ge 11$, from which it follows that $y' \ge 10$.
	\item Finally, if $x \ge 2$ and $y \ge 2$, then we still have $y+y' \ge 11$ from before. Additionally, we also have $x+x' \ge \frac{96}{14} > 6$, from which it follows that $x+x' \ge 7$ and thus $x'+y' = (x+x')+(y+y')-(x+y)\ge 8$.
\end{enumerate}

Now assume that $n \ge 3$. Since $\frac{2(n^3+1)}{n^2 (n^2 + 1)} < 1$ and $x+y \le \frac{n^2 (n^2 + 1)}{2}$, in order to prove $i(G) \ge \frac{2(n^3+1)}{n^2 (n^2 + 1)}$, it suffices to prove that $\frac{x'+y'}{x+y} \ge 1$ or $x'+y' \ge n^3+1$. This is proved as follows, again using the second and third inequalities of Corollary \ref{lem:keycorollary}.
\begin{enumerate}[(a)]
	\item If $y=0$, then $\frac{x'+y'}{x+y} \ge \frac{y+y'}{x} \ge \frac{r^2}{r+\lambda(x-1)} \ge \frac{r^2}{r+\lambda(v-1)} = \frac{r}{k} \ge 1$.
	
	\item If $x=0$ and $y \le n^2$, then $\frac{x'+y'}{x+y} \ge \frac{x+x'}{y} \ge \frac{rk}{r^2 - \lambda(b-y)} \ge \frac{rk}{r^2 - \lambda(b-n^2)} = n \ge 1$.
	
	\item If $x=0$ and $y > n^2$, then $x+x' \ge \frac{rky}{r^2-\lambda(b-y)} \ge \left(\frac{rk}{r^2-\lambda(b-n^2)}\right)y = ny > n^3$. Since $x'$ is an integer, it follows that $x' = n^3+1$.
	
	\item If $x \ge 1$ and $y \ge 1$, then $y+y' \ge r$ and $x+x' \ge k$, so that
	\[
	x'+y' = (x+x')+(y+y')-(x+y) \ge n^4-n - \frac{n^4+n^2}{2} \ge n^3+1. \qedhere
	\]
\end{enumerate}

This completes the proof of Theorem \ref{thm:noninc}.

\bigskip
\noindent \textbf{Acknowledgement}~~A. Hui was supported by the Young Scientists Fund (Grant No. 11701035) of the National Natural Science Foundation of China. S. Zhou was supported by a Future Fellowship (FT110100629) of the Australian Research Council.


\begin{thebibliography}{9}
	
	\bibitem{AK}
	E. F. Assmus Jr. and J. D. Key.
	Arcs and ovals in the Hermitian and Ree unitals.
	\textit{European J. Combin.} {\bf 10} (1989) 297--308.
	
	\bibitem{BE}
	R. D. Baker and G. L. Ebert.
	Intersections of unitals in the Desarguesian plane.
	\textit{Congr. Numer.} {\bf 70} (1990) 87--94.
	
	\bibitem{BE2}
	R. D. Baker and G. L. Ebert.
	On Buekenhout-Metz unitals of odd order.
	\textit{J. Combin. Theory Ser. A} {\bf 60} (1992) 67--84.
	
	\bibitem{BE3}
	S. G. Barwick, G. Ebert.
	{\it Unitals in Projective Planes}.
	Springer Monographs in Mathematics, New York, 2008.
	
	
	\bibitem{Bkt}
	F. Buekenhout.
	Existence of unitals in finite translation planes of order $q^2$ with a kernel of order $q$.
	\textit{Geom. Dedicata} {\bf 5} (1946) 189--194.
	
	\bibitem{colbourn2010crc}
	C. J. Colbourn.
	\textit{CRC Handbook of Combinatorial Designs.}
	CRC Press, 2010.
	
\bibitem{DMM08}
S. Du, A. Malni\v{c}, and D. Maru\v{s}i\v{c}. Classification of 2-arc-transitive dihedrants. \textit{J. Combin. Theory Ser. B} {\bf 98}(6) (2008) 1349--1372.

	\bibitem{E}
	G. L. Ebert. On Buekenhout-Metz unitals of even order.
	\textit{European J. Combin.} {\bf 13} (1992) 109--117.
	
\bibitem{ESZ}
A. Elvey-Price,\ M. A. Surani and S. Zhou. The isoperimetric number of the incidence graph of $\PG(n,q)$. Submitted. \url{https://arxiv.org/abs/1612.03293}

	\bibitem{fisher1986complete}
	J. C. Fisher, J. W. P. Hirschfeld, and J.A. Thas.
	Complete arcs in planes of square order.
	\textit{Annals of Discrete Math.} {\bf 30} (1986) 243--250.

\bibitem{harper2004global}
	L. H. Harper.
	\textit{Global Methods for Combinatorial Isoperimetric Problems.} 
	Cambridge University Press, 2004.

\bibitem{harper1994isoperimetric}
L. Harper and F. Hergert. The isoperimetric problem in finite projective planes. \textit{Congr. Numer.}  {\bf 103} (1994) 225--232.


	\bibitem{HS}
	J. W. P. Hirschfeld and T. Sz\H{o}nyi. Sets in a finite plane with few intersection numbers	and a distinguished point.
	\textit{Discrete Math.} {\bf 97} (1991) 229--242.
	
\bibitem{HJ}
T. H\o holdt and H. Janwa. Eigenvalues and expansion of bipartite graphs. \textit{Des. Codes Cryptogr.} {\bf 65} (2012) 259--273.

\bibitem{HLW}	
S. Hoory, N. Linial and A. Wigderson. Expander graphs and their applications. \textit{Bull. Amer. Math. Soc.} {\bf 43}(4) (2006) 439--561.

\bibitem{lanphier2006expansion}
D. Lanphier, C. Miller, J. Rosenhouse and A. Russell. Expansion properties of Levi graphs. \textit{Ars Combin.} {\bf 80} (3) (2006) 1--7. 

	\bibitem{Me}
	R. Metz.
	On a class of unitals.
	\textit{Geom. Dedicata} {\bf 8} (1979) 125--126.
	
\bibitem{O'Nan}
M. E. O'Nan. Automorphisms of unitary block designs. \textit{J. Algebra} {\bf 20} (1972) 495--511.

\bibitem{Taylor}
D. E. Taylor. Unitary block designs. \textit{J. Combin. Theory Ser. A} {\bf 16} (1974) 51--56.

\bibitem{ure1996study}
P. Ure. A study of $(0, n, n + 1)$-sets and other solutions of the isoperimetric problem in finite projective planes. PhD thesis, California Institute of Technology, 1996.
\end{thebibliography}
\end{document}